\newtheorem{theorem}{Theorem}
\newtheorem{lemma}{Lemma}
\newtheorem{remark}{Remark}
\def\XXint#1#2#3{{\setbox0=\hbox{$#1{#2#3}{\int}$ }
		\vcenter{\hbox{$#2#3$ }}\kern-.6\wd0}}
\def\qaq{\quad\text{and}\quad}
\def\Div{{\rm div}}
\def\bx{{\mathbf{x}}}
\def\by{{\mathbf{y}}}
\def\bz{{\mathbf{z}}}
\def\bu{{\mathbf{u}}}
\def\bn{{\mathbf{n}}}
\begin{document}
	\title[Convergence Rate]{On the convergence rates of multi-dimensional subsonic irrotational flows in unbounded domains}
\author{Lei Ma}
\address{Lei Ma, College of Science, University of Shanghai for Science and Technology, 516 Jungong Road, Shanghai, 200093, China}
\email{leima@usst.edu.cn}
\author{Tian-Yi Wang}
\address{Tian-Yi Wang, Department of Mathematics, School of Science, Wuhan University of Technology, 122 Luoshi Road, Wuhan, Hubei, 430070, China}
\email{tianyiwang@whut.edu.cn}

\date{}
	\begin{abstract}	This paper is concerned with the convergence rates of subsonic flows for airfoil problem and infinite long largely-open nozzle problem, which is an improvement of \cite{MR1211737, MR0092912, MR3148105, MR92913}. The maximum principle is applied to estimate the potential function, by choosing the proper compared functions. Then, by the weighted Schauder estimates, the convergence rates of velocity at the far field are shown as $|\bx|^{-n+1}$. Furthermore, we construct the examples to show the optimality of our convergence rates and the expansion of the incompressible airfoil flow at infinity, indicating the higher convergence rates  $|\bx|^{-n}$.\\[3mm]{\bf Keywords:}    Convergence rates, Subsonic irrotational flows, Airfoil problem, Infinite long largely-open nozzle
		\\[3mm]{\bf Mathematics Subject Classification 2020:}  35J15 
		35B40 
		76G25 
		35Q35 
		76Nxx 
	
\end{abstract}
\maketitle
\section{Introduction}

We are concerned with  the convergence rates of multi-dimensional subsonic irrotational flows in unbounded domains. The $n$-dimensional$(n\geq3)$ steady homentropic Euler equations  are written as:
	\begin{equation}\label{eulereq-1}
	\begin{cases}
		\Div(\rho  {\bf u}) = 0,   \\
		\Div(\rho {\bf u}\otimes {\bf u})+\nabla p=0,
	\end{cases}
\end{equation}
where $\bu=(u_1,\cdots,u_n)$,  $\rho$, and $p$ represent the velocity, density, and pressure respectively.
For the homentropic flow, $p$ is a function of $\rho$, which satisfies $C([0, +\infty))\cap C^2((0, +\infty))$ with
\begin{equation}\label{conditiononpressure}
	p'(\rho)>0,  \quad 2p'(\rho) + \rho p''(\rho) > 0 \qquad \mbox{for $\rho>0$},
\end{equation}
which holds for the flows governed by the thermodynamic relation that $p=\frac{1}{\gamma} \rho^\gamma$ with $\gamma\geq 1$. Suppose that the flow is irrotational, \textit{i.e.},
 \begin{equation}\label{irrotational}
\text{curl}\  {\bf u}=0.
 \end{equation}
 Combining \eqref{eulereq-1} and \eqref{irrotational}, we could consider the following equivalence form:
 \begin{equation}\label{eulereq}
 	\begin{cases}
 		\Div(\rho  {\bf u}) = 0,   \\
 		{\rm curl}\  {\bf u}=0,\\
 		\frac{1}{2}|\bu|^2+h(\rho)\equiv B,
 	\end{cases}
 \end{equation}
 while the last equation is called the Bernoulli's law. Here, $B$ is a constant, and $h(\rho)$ is the enthalpy defined by $h^\prime(\rho)=\frac{p^\prime(\rho)}{\rho}$. In virtue of \eqref{conditiononpressure} and  $(\ref{eulereq})_3$, the density $\rho$ could be regarded as the function of speed $|\bu|$, which could be written as $\rho(|\bu|^2)$.

 By the sound speed $ c(\rho)=\sqrt{p'(\rho)}$,
 the flow could be classified as  supersonic, sonic, or subsonic when the $|{\bf u}|>c$, $=c$, $<c$, respectively. Furthermore, there is a critical speed $q_c$ such that the speed $|\bu|<q_{cr}$ if and only if $|\bu|<c(\rho)$\cite{1948sfswbookC}. For $p(\rho)=\frac{1}{\gamma}\rho^\gamma$ with $\gamma\geq 1$, the critical speed is $q_{cr}=\sqrt{\frac{2}{\gamma+1}}$.

The main objective of this paper is to investigate the convergence rates of velocity $\bu$ at far fields for the following two types of problems in the unbounded domains.

For Airfoil problem, let $\mathcal{B}(\Gamma)$ (airfoil) include finite disjoint bounded branch in $\mathbb{R}^n$ ($n\geq3$) such that its boundary $\Gamma$ consists of one or several closed and isolated $n-1$ dimensional $C^{2,\alpha}$ (for some $0<\alpha<1$) hypersurfaces. Let $\Omega^\mathcal{B}$ be the exterior domain
of $\mathcal{B}(\Gamma)$, \textit{i.e.}, $\Omega^\mathcal{B}:=\mathbb{R}^n\backslash\mathcal{B}$, which is connected, see Figure \ref{fig:1}. Then, we could introduce the following problem:

\begin{figure}
	\begin{center}
		\includegraphics[]{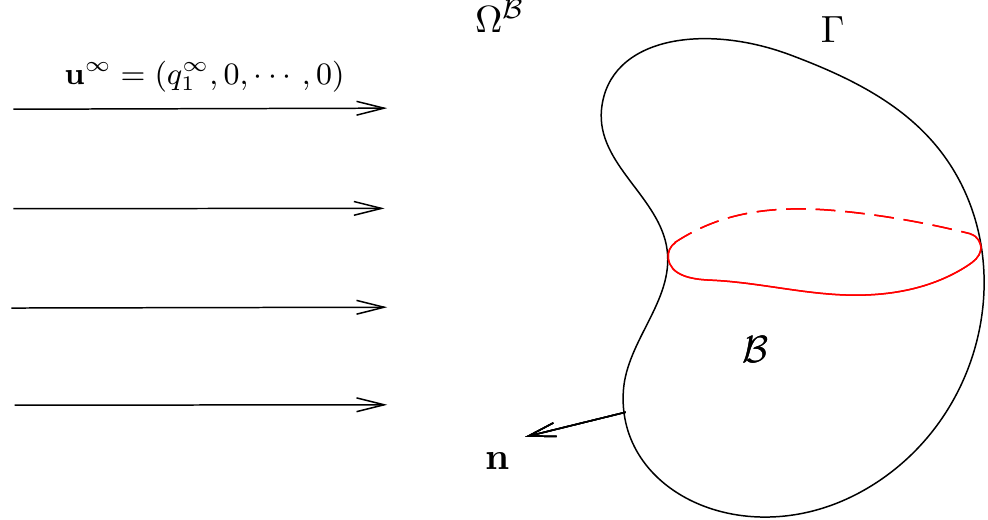}\\
	\centering	\caption{Exterior domain}
	\end{center}\label{fig:1}
\end{figure}

\textbf{Airfoil Problem}.  Find functions $\bu$ satisfy \eqref{eulereq} in $\Omega^\mathcal{B}$
with the slip boundary
condition
\begin{equation}\label{boundarycondition1}
	\bu\cdot \bn=0\ \ \mbox{on}\ \ \Gamma,
\end{equation}
where $\bn=(n_1,\cdots,n_n)$ denotes the unit outer normal of domain
$\Gamma$. Write $\bx=(x_1, \cdots, x_n)$, the flow satisfies the following asymptotic condition
\begin{equation}\label{u-inftycondtion}
	\lim_{|\bx|\rightarrow \infty} \bu(\bx)=\bu^\infty.
\end{equation}
Without loss of the generality, we also assume that $\bu^{\infty}=(q^{\infty}_1,0, \cdots,0)$.

To consider the convergence rates at the far field, we could introduce the respective far field flow as:

\textbf{Airfoil Far Field Flow}.  $\bu$ satisfy \eqref{eulereq} and \eqref{u-inftycondtion} in $\Omega^{\mathcal B}\cap\{|\bx|\geq R_1\}$ with $R_1>0$ be a sufficiently large number.

It is easy to see the solution $\bu$ of \textbf{Airfoil Problem} is \textbf{Airfoil Far Field Flow}, but may not other way around, due to the different boundaries and the boundary conditions.

The other type of the unbounded domain is the infinite long largely-open nozzle  $\Omega^\mathcal{N}\subset\mathbb{R}^n$(see Figure \ref{NozzleF}) which is connected by divergent domain $\mathcal C_+$, convergent part $\mathcal C_-$, and the throat part $\mathcal{T}$. Here $\mathcal{C}_+$ (resp. $\mathcal{C}_-$) is an infinite long part of a cone with the cross section $\Sigma_+$(resp. $\Sigma_-$) whose center is  $A_+$ (resp. $A_-$), where $\Sigma_+$ (resp. $\Sigma_-$) is a connected smooth part of the unit ($n-1$)-sphere $\mathbb{S}^{n-1}\subset\mathbb{R}^n$. $\mathcal T$ is a bounded domain connected $\mathcal{C}_+$ and $\mathcal{C}_-$ such that $\partial\Omega^{\mathcal N}$ is $C^{2,\alpha}$ with $0<\alpha<1$. 

\begin{figure}
	\begin{center}
		\includegraphics[]{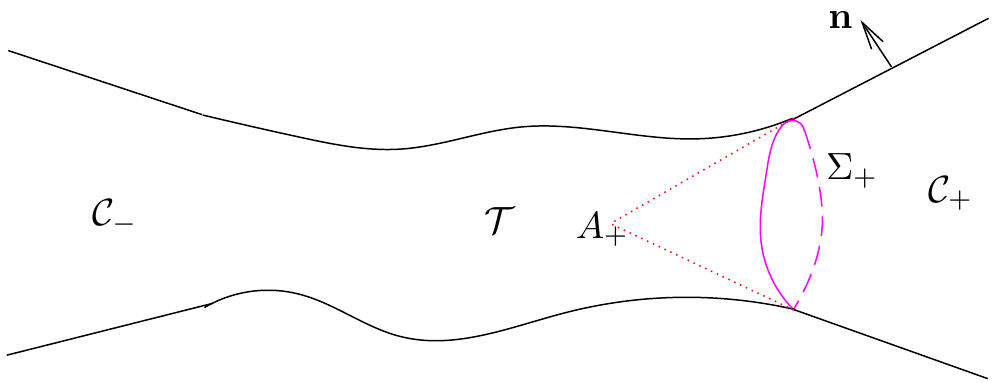}\\
		\caption{Nozzle domain}\label{NozzleF}
	\end{center}
\end{figure}

The fluid fills in the region $\Omega^\mathcal{N}$ and  satisfies the slip boundary condition:
\begin{eqnarray}\label{boundarycondition2}
	\bu\cdot\bn=0\quad \mbox{on} \quad \partial{\Omega^\mathcal{N}},
\end{eqnarray}
where $\bn$ is the unit outward normal of domain $\partial\Omega^\mathcal{N}$. Applying the divergence theorem to $(\ref{eulereq})_1$ and \eqref{boundarycondition2}, one can obtain the fixed mass flux property:
\begin{equation}\label{flux}
	\int_{\mathcal{S}_0}\rho \bu\cdot \mathbf{l}ds=m,
\end{equation}
where $\mathcal{S}_0$ is any cross-section of $\Omega^\mathcal{N}$ and $\mathbf{l}$ is the unit outer vector of $\mathcal S_0$ pointed into downstream. $m>0$ is called the mass flux.
Then, we could introduce the following problem:

\textbf{Largely-open Nozzle Problem}. Find functions $\bu$, which satisfies \eqref{eulereq}, with the slip boundary condition \eqref{boundarycondition2} and mass flux condition \eqref{flux}.

The respective far field flow is:

\textbf{Largely-open Nozzle Far Field Flow}. $\bu$ satisfies \eqref{eulereq} in $\Omega^{\mathcal N}\cap\{|\bx|\geq R_1\}$ with the slip boundary condition \eqref{boundarycondition2} and mass flux condition \eqref{flux}.

The study of subsonic flow has a long research history \cite{Bers1958Mathematical}. The first mathematical result is on  the existence and uniqueness of \textbf{Airfoil Problem} in 2 dimension for small data by \cite{Frankl1934Keldysh}.  An important progress was made by Shiffman \cite{shiffman1952existence} via the variational method. Later on, Bers \cite{MR0065334} proved the existence of two dimensional irrotational subsonic flows around a profile with a sharp trailing edge. The uniqueness theory of the subsonic plane flow was established in \cite{MR0086556}. For the three (or higher) dimensional case, Dong and Ou introduced a proper Hilbert space
for the variational method  and obtained the existence and uniqueness of \textbf{Airfoil Problem} in \cite{MR1211737}, please also see \cite{MR1134129}. The  well-posedness of two and higher dimensional subsonic irrotational flows in the infinite long nozzle had been investigated in \cite{MR2375709, MR2824469} respectively. For \textbf{Largely-open Nozzle Problem}, \cite{MR3148105} showed well-posedness of the subsonic flow by the variational method. Furthermore, for the incompressible cases, \cite{MR1280178} and \cite{MR4093720} proved the well-posedness of \textbf{Airfoil Problem} and \textbf{Largely-open Nozzle Problem}, respectively. For other studies on the subsonic flow with nonzero vorticity, one can refer \cite{MR3914482,MR3805847, MR3196988,MR2607929} and references therein.

Besides the well-posedness, it is natural to investigate the fine properties of the subsonic flows. One typical research field is the asymptotic behavior of the subsonic flows in the unbounded domains. The asymptotic properties of the subsonic flows act a significant role in the various physical settings such as the force and moment formula of the dynamics. Mathematically, it is also related to the uniqueness theory or the Liouville type theorem in the unbounded domains.

For \textbf{Airfoil Problem} in 2 dimension, the convergence rate of velocity fields is $|\bx|^{-1}$ which was also proved by Finn and Gilbarg in \cite{MR0086556}. Later on, for the $n$ dimensional case $(n\geq 3)$, Finn and Gilbarg \cite{MR0092912}
 obtained that velocity tends to a constant state at far fields with $|\bx|^{-\lambda+\epsilon}$ for any prescribed $\epsilon>0$, where $\lambda=\min{\{(n-2), 2\sqrt{n-1}\}}$, by estimating Dirichlet integral of each velocity component. The convergence rates are improved to $|\bx|^{2-n+\epsilon}$ by Payne and Weinberger in \cite{MR92913}. On the other hand, as the direct consequence of variational method, in the $n$ dimensional situations $(n\geq3)$, Dong and Ou\cite{MR1211737}  showed that the convergence rates of the velocity at far fields were $|\bx|^{-n/2}$. Similarly, for \textbf{Largely-open Nozzle Problem}, Liu and Yuan also showed that the convergence rates of the velocity at far fields were $|\bx|^{-n/2}$ in \cite{MR3148105}. For subsonic flow in the infinite long nozzle,  as the nozzle tends to the cylinder at infinite, the convergence rates of velocity was investigated in \cite{MR4128450}. Later, \cite{MR4536587} gave the convergence rates of velocity at far fields as the boundary of the nozzle goes to flat even when the forces do not admit
convergence rates at far fields, which revealed the influence of  the external forces on the convergence rates. For other studies on the subsonic flow with nonzero vorticity, one can refer \cite{MR3196988, MR4112818}.


In this paper, we will improve the order of convergence rates of the velocity of \textbf{Airfoil Far Field Flow} and \textbf{Largely-open Nozzle Far Field Flow}
from \cite{MR1211737, MR0092912, MR3148105, MR92913} 
to $|\bx|^{-n+1}$.  Comparing with \cite{MR0092912, MR92913}, we consider the potential function instead of each velocity component. By the primitive convergence rates, the potential function vanishes at infinity. And,  the key observation is that the coefficients, of which the potential function satisfies the quasi-linear elliptic equation, tend to limit states with certain convergence rates.  Using this fact and the elliptic property, we can choose a compared function to derive the estimates of the potential by the maximum principle, which leads to the higher convergence rates by the weighted Schauder estimate on the gradients of potential. This is established in section 2.  Unlike \cite{MR1211737,  MR3148105}, we concentrate the convergence rates of \textbf{Airfoil Far Field Flow} and \textbf{Largely-open Nozzle Far Field Flow}, do not rely on the well-posedness of \textbf{Airfoil Problem} and \textbf{Largely-open Nozzle Problem}. Furthermore, in section 3, we construct the examples to show the optimality of our convergence rates, and show the expansion of the incompressible  \textbf{Airfoil Problem} solution at infinity, which indicates the higher convergence rates.

\section{Asymptotic Behavior}
Before stating the main theorem of this paper, we introduce some notations. For any $0<R_1<R_2$, denote$$\Omega_{R_1}=\Omega\cap\{|\bx|\geq R_1\}\qaq\Omega_{R_1,R_2}=\Omega\cap\{R_1\leq|\bx|\leq R_2\}$$
are the simply connected part of $\Omega$. With abuse of notations, $\Omega$ can be regarded as $\Omega^{\mathcal B} $ or $\Omega^{\mathcal N}$ in the {\bf airfoil problem} or {\bf Largely-open nozzle problem}. And $B_{R_1}(\bx_0)\subset\mathbb{R}^n$ is the ball with radius $R_1$ centered at $\bx_0$.

The main Theorem can be stated as follows.
\begin{theorem}\label{mainthm} For $n\geq3$, suppose $\bu$ is the subsonic \textbf{Airfoil Far Field Flow} or \textbf{Largely-open Nozzle Far Field Flow}, there exists a positive constant $\sigma$ for the respective $\bu^\infty$ such that
	\begin{equation}\label{first_decay}
		|\bu(\bx)-\bu^\infty|\leq C|\bx|^{-1-\sigma}\quad\text{in}\quad\Omega_{R},\end{equation}
	while $C$ and $R$ are the fixed constants. Then, for some $R'\geq R$,
	\begin{equation}\label{main_estimate}
		|\bu(\bx)-\bu^\infty|\leq C_0|\bx|^{-n+1}\quad\text{when}\quad \bx\in\Omega_{R'},
	\end{equation}
	where $C_0$ is a uniform constant depending on $\Omega_{R'}$.
\end{theorem}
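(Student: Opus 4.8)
The plan is to pass to the velocity potential and reduce the whole problem to a single uniformly elliptic scalar equation whose coefficients approach constants at a definite rate. Since $\bu$ is irrotational and the far-field regions $\Omega_R$ are simply connected, I would write $\bu=\nabla\phi$ and set $\psi=\phi-\bu^\infty\cdot\bx$, so that $\nabla\psi=\bu-\bu^\infty$; the hypothesis \eqref{first_decay} gives $|\nabla\psi|\le C|\bx|^{-1-\sigma}$, and integrating along rays into the far field (where $\nabla\psi\to0$) yields $|\psi(\bx)|\le C|\bx|^{-\sigma}\to0$. Inserting $\bu=\nabla\phi$ and $\rho=\rho(|\bu|^2)$ into $\Div(\rho\bu)=0$ and using $\partial_{ij}\phi=\partial_{ij}\psi$, the equation takes the non-divergence form $\sum_{i,j}a_{ij}(\bx)\,\partial_{ij}\psi=0$ with $a_{ij}=\rho\,\delta_{ij}+2\rho'\,\partial_i\phi\,\partial_j\phi$ evaluated along the flow. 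Subsonicity makes this operator uniformly elliptic (its eigenvalues are $\rho>0$ and $\rho(1-|\bu|^2/c^2)>0$), and since $a_{ij}$ is a smooth function of $\nabla\psi$ equal to its limit $a_{ij}^\infty$ when $\nabla\psi=0$, the coefficients satisfy $|a_{ij}(\bx)-a_{ij}^\infty|\le C|\nabla\psi|\le C|\bx|^{-1-\sigma}$. A fixed linear change of variables (rescaling the $x_1$-axis for the airfoil; the identity for the nozzle, where $\bu^\infty=0$) turns the limit operator into a multiple of $\Delta$ and distorts $|\bx|$ only by bounded factors, so it preserves decay orders; I will work in these coordinates and write the equation as $\Delta\psi=g$ with $g=\sum(a_{ij}^\infty-a_{ij})\partial_{ij}\psi$, hence $|g|\le C|\bx|^{-1-\sigma}|\nabla^2\psi|$.

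Next I would run a comparison argument against radial barriers. For any fixed $\beta<n-2$, the function $A|\bx|^{-\beta}$ satisfies $\sum a_{ij}\partial_{ij}(A|\bx|^{-\beta})=A\beta(\beta+2-n)|\bx|^{-\beta-2}\bigl(1+O(|\bx|^{-1-\sigma})\bigr)\le0$ once $|\bx|\ge R'$, because $\beta(\beta+2-n)<0$ and the coefficient perturbation is of lower order. Choosing $A$ so that the barrier dominates $\psi$ on $\{|\bx|=R'\}$ and comparing on the annuli $\{R'\le|\bx|\le R_2\}$ as $R_2\to\infty$ (using $\psi\to0$), the maximum principle yields $|\psi|\le C|\bx|^{-\beta}$. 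A rescaled, or weighted, Schauder estimate on dyadic annuli then gains one power per derivative and gives $|\nabla\psi|\le C|\bx|^{-\beta-1}$ and $|\nabla^2\psi|\le C|\bx|^{-\beta-2}$. Fixing $\beta$ close enough to $n-2$, I obtain a rate $|\nabla\psi|\le C|\bx|^{-\mu_0}$ with $\mu_0\in(n-2-\sigma,\,n-1)$; note the constant degenerates as $\beta\uparrow n-2$, which is precisely why this step alone cannot reach the endpoint.

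These comparisons must be compatible with the boundaries of $\Omega_{R'}$. For the airfoil, $\Omega_{R'}=\{|\bx|\ge R'\}$ is the exterior of a ball once $R'$ exceeds the diameter of $\mathcal B(\Gamma)$, so only the sphere $\{|\bx|=R'\}$ enters. For the nozzle, the far-field walls are exact cones $\mathcal C_\pm$ with vertices $A_\pm$, and the slip condition \eqref{boundarycondition2} becomes the homogeneous Neumann condition $\nabla\psi\cdot\bn=0$; centering the radial barriers at the respective vertex makes $\nabla(|\bx-A_\pm|^{-\beta})\cdot\bn=0$ automatic on the conical walls, so the comparison runs on each conical end separately.

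The decisive step, and the main obstacle, is removing the endpoint loss to reach the sharp exponent $n-1$; this is exactly where $\sigma>0$ is used. With the rate $\mu_0>n-2-\sigma$ from the previous step, the source obeys $|g|\le C|\bx|^{-1-\sigma}|\nabla^2\psi|\le C|\bx|^{-n-\tau}$ with $\tau=\mu_0+\sigma+2-n>0$, so $g$ decays strictly faster than the critical rate $|\bx|^{-n}$. I would then compare $\psi$ with the barrier $W=B|\bx|^{2-n}-c|\bx|^{2-n-\tau}$, where $c>0$ is chosen so that $\Delta W=-C|\bx|^{-n-\tau}\le-|g|$; since $\tau>0$ the corrector decays faster than $|\bx|^{2-n}$, so $W\le B|\bx|^{2-n}$, while for $B$ large $W\ge\psi$ on $\{|\bx|=R'\}$. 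Because $\Delta(\psi-W)=g+C|\bx|^{-n-\tau}\ge0$, the function $\psi-W$ is subharmonic and attains its maximum on the boundary, which together with the symmetric argument for $-\psi$ gives $|\psi|\le C|\bx|^{2-n}$. One last weighted Schauder estimate then delivers $|\bu-\bu^\infty|=|\nabla\psi|\le C_0|\bx|^{-n+1}$. The delicate bookkeeping issue is the resonant threshold $\tau=0$: one must keep $\mu_0$ strictly above $n-2-\sigma$ (so that $g$ is genuinely subcritical and no logarithmic factor $|\bx|^{2-n}\log|\bx|$ appears), and verify that the constants and the radius $R'$ produced by the two comparison steps remain uniform on $\Omega_{R'}$.
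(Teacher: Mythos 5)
Your proposal is correct and follows essentially the same route as the paper: pass to the potential, observe that the coefficients $a_{ij}$ converge to $a_{ij}^\infty$ at rate $|\bx|^{-1-\sigma}$, compare $\Phi$ against a radial barrier of the form ``harmonic profile minus a strict supersolution corrector'' (centered at the cone vertex with Hopf's lemma on the nozzle walls), and finish with a scaled/weighted Schauder estimate. The only difference is organizational: the paper applies the coefficient perturbation to the explicit barrier $\psi=Q^{2-n}-Q^{2-n-\beta}$ (whose second derivatives are known to be $O(Q^{-n})$), so the corrector's defect $-\beta(n-2+\beta)Q^{-n-\beta}$ absorbs the $O(Q^{-n-1-\sigma})$ error and the sharp bound $|\Phi|\leq C|\bx|^{2-n}$ follows from a single comparison, whereas you move the perturbation to the right-hand side as a source involving $\nabla^2\psi$ of the unknown and therefore need the extra preliminary bootstrap step.
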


\begin{remark}
	The existence of $\bu$ satisfying \eqref{first_decay} could be found in \cite{MR1211737,MR3148105} for \textbf{Airfoil Far Field Flow} and \textbf{Largely-open Nozzle Far Field Flow}, respectively.
\end{remark}
\begin{remark}
	For $1+\sigma<n-1$, the estimate \eqref{main_estimate} indicates a better convergence rate than the one from  \eqref{first_decay}; for \cite{MR0092912, MR92913},  $\sigma=n-3-\epsilon$;
	 for \cite{MR1211737,MR3148105},  $\sigma=n/2-1$. 
\end{remark}

\begin{remark}  
	For $n=2$, the convergence rate of velocity fields is $|\bx|^{-1}$\cite{MR0086556}, which meets \eqref{main_estimate}, then  we need only consider the case for $n\geq3$.
\end{remark}

\subsection{Potential function} According to \eqref{irrotational}, in the simply connected set $\Omega_{R}$,
we can introduce the velocity potential $\varphi$ such that
$
	\nabla \varphi=\bu,
$
while the slip boundary condition \eqref{boundarycondition1} and \eqref{boundarycondition2} becomes
$
\nabla \varphi\cdot \bn=0.
$

 In virtue of Bernoulli's law $(\ref{eulereq})_3$, the density $\rho$ can be written as $\rho(|\nabla\varphi|^2)$. Hence, the mass conservation $(\ref{eulereq})_1$ can be reduced to the single quasi-linear equation
\begin{equation*}\label{divergence_form}
	\Div(\rho(|\nabla\varphi|^2)\nabla\varphi)=0.
\end{equation*}
It is easy to see that its non-divergence form is
\begin{equation}\label{non-divergence_form}
	\sum_{i,j=1}^n a_{ij}(\nabla\varphi)\partial_{ij}\varphi=0,
\end{equation}
where \begin{equation}\label{aijdefintion}
a_{ij}(\nabla\varphi)=\rho(|\nabla\varphi|^2)\delta_{ij}+2\rho'(|\nabla\varphi|^2)\partial_i\varphi \partial_j\varphi.
\end{equation}
Due to the regularity of $\rho(|\mathbf{p}|^2)$, for $\mathbf{p}$, $\mathbf{p'}\in\mathbb{R}^n$ satisfying $|\mathbf{p}|<q_{cr}$ and $|\mathbf{p'}|<q_{cr}$, one has
\begin{equation}\label{aijregularity}
	|a_{ij}({\mathbf{ p}})-a_{ij}({\mathbf {p}'})|\leq C |\mathbf{p}-\mathbf{p}'|,
\end{equation}
while $C$ depends on $\max\{|\mathbf{p}|, |\mathbf{ p}'|\}$. Next, we use $a_{ij}=a_{ij}(\nabla\varphi)$ unless otherwise specified. It is easy to see $(a_{ij})$ is a symmetric matrix.
For the uniformly subsonic flow, there exist two positive constants $\lambda$ and $\Lambda$ such that, for ${\bf \xi}\in\mathbb{R}^n$
$$
	\lambda|\xi|^2\leq \sum_{i, j=1}^na_{ij}\xi_i\xi_j\leq\Lambda|\xi|^2.
$$

Similarly, one could introduce the potential for the velocity at infinity, as $\varphi^\infty=\bu^\infty\cdot\bx$ with  $\nabla\varphi^\infty=\bu^\infty$.  For \textbf{Airfoil Far Field Flow}, $\varphi^\infty=q_1x_1$; and for \textbf{Largely-open Nozzle Far Field Flow},  $\varphi^{\infty}=0$. Since $\varphi^\infty$ is a linear function, it also holds \eqref{non-divergence_form}. Let the corresponding coefficients be
$$a_{ij}^\infty:=a_{ij}(\nabla\varphi^\infty)=a_{ij}(\bu^\infty).
$$
It follows from  \eqref{first_decay} and \eqref{aijregularity} that
\begin{equation}\label{aijproperty}
	|a_{ij}-a_{ij}^\infty|\leq C |\nabla\varphi-\nabla\varphi^\infty|\leq C|x|^{-1-\sigma}\quad \text{in $\Omega_R$}.
\end{equation}

 Next, instead of considering the difference between $\bu$ and $\bu^\infty$, we consider the difference of potential function $\Phi:=\varphi-\varphi^\infty$, with $\nabla \Phi=\bu-\bu^\infty$. Since $\varphi^\infty$ is a linear function, $\Phi$ also satisfies
 \begin{equation}\label{Phi}
 \sum_{i, j=1}^n a_{ij}\partial_{ij}\Phi=0,
 \end{equation}
 while $a_{ij}$ are defined as \eqref{aijdefintion} with property \eqref{aijproperty}. Then, the decay condition \eqref{first_decay} equals to
 \begin{equation}\label{lemma1estimate}
 	|\nabla\Phi|\leq C |\bx|^{-1-\sigma}.
 \end{equation}
To consider the infinity states of $\Phi$, one could introduce the Kevin transformation: $\by:=\frac{1}{|\bx|^2}\bx$, which maps $\Omega_R$ to $D'\subset B_{1/R}({\bf0})$. For $\by\in D'$, according to \eqref{lemma1estimate}, the respective function  $\tilde{\Phi}(\by):=\Phi(\frac{1}{|\by|^2}\by)$ satisfies
\begin{equation}\label{lemma1estimateY}
	|\nabla_{\by}\tilde{\Phi}(\by)|\leq\bigg|\frac{\partial \bx}{\partial\by}\nabla_{\bx}\Phi\bigg|\leq C|\by|^{-2}|\bx|^{-1-\sigma}\leq |\by|^{-1+\sigma}.
\end{equation}
 Then, we could conclude that there exists a unique constant $\Phi^\infty$ such that
	\begin{equation*}
	\lim\limits_{|\bx|\rightarrow+\infty\atop \bx\in\Omega_{R}}\Phi(\bx)=\Phi^\infty.
\end{equation*}
Without loss of generality, we assume $\Phi^\infty=0$, otherwise one could redefine $\Phi$ by $\Phi-\Phi^\infty$. Then there is a uniform constant $M_0$ such that
\begin{equation*}
	|\Phi(\bx)|\leq M_0 \quad\text{for } \bx\in\Omega_{R}.
\end{equation*}

Next, we will start to construct the comparison function concerning $\Phi$.
  Denote $(A_{ij})$ is the inverse matrix of $(a_{ij}^\infty)$, which also is a symmetric matrix, and $$Q(\bx):=\left(\sum_{i, j=1}^n A_{ij}x_ix_j\right)^{1/2}.$$
Then there exists a constant $C$, which only depends on $\lambda$ and $\Lambda$, such that
\begin{equation*}
\frac{1}{C}|\bx|\leq Q(\bx)\leq C|\bx|.
\end{equation*}
The following Lemma is about the estimates of $Q$ which is convenient to carry out the maximum principle.
\begin{lemma}\label{lemma2}
	 For any $0<\beta<1$, denote $\psi=Q^{2-n}-Q^{2-n-\beta}$. Supppse $a_{ij}$ satisfies \eqref{aijproperty}, then it holds that
	\begin{equation}\label{Lpsi}
\sum_{i, j=1}^n a_{ij}\partial_{ij}\psi=-\beta(n-2+\beta)Q^{-n-\beta}+O(Q^{-n-1-\sigma}).
	\end{equation}
\end{lemma}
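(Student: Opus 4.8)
The plan is to split the operator as $\sum_{i,j} a_{ij}\partial_{ij}\psi = \sum_{i,j} a_{ij}^\infty\partial_{ij}\psi + \sum_{i,j}(a_{ij}-a_{ij}^\infty)\partial_{ij}\psi$ and treat the two pieces separately: the frozen-coefficient principal part will produce the stated leading term, and the perturbation will contribute only the $O(Q^{-n-1-\sigma})$ remainder. First I would record the derivatives of $Q$. Writing $w_i:=\sum_j A_{ij}x_j$, so that $\sum_i x_i w_i = Q^2$, a direct computation gives $\partial_i Q = w_i/Q$ and $\partial_{ij}Q = A_{ij}/Q - w_iw_j/Q^3$. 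Hence for any smooth radial profile $f$ one has $\partial_{ij}f(Q) = f''(Q)\,w_iw_j/Q^2 + f'(Q)\,(A_{ij}/Q - w_iw_j/Q^3)$.

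The crucial simplification comes from $(A_{ij})$ being the inverse of $(a_{ij}^\infty)$. Contracting the previous formula against $a_{ij}^\infty$ and using the two algebraic identities $\sum_{i,j}a_{ij}^\infty A_{ij} = \mathrm{tr}(a^\infty A) = n$ and $\sum_{i,j}a_{ij}^\infty w_iw_j = \bx^\top A\,a^\infty A\,\bx = \bx^\top A\bx = Q^2$ (both using symmetry of $(a_{ij}^\infty)$ and $(A_{ij})$ and the relation $a^\infty A = I$), all the $w_iw_j$ cross terms collapse and one is left with the radial operator $\sum_{i,j}a_{ij}^\infty\partial_{ij}f(Q) = f''(Q) + \frac{n-1}{Q}f'(Q)$. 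This is exactly the $n$-dimensional radial Laplacian in the variable $Q$, which is the conceptual heart of the lemma: the frozen operator sees $Q$ precisely as the Euclidean Laplacian sees $|\bx|$.

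Evaluating this radial operator on the two powers finishes the principal part. For $f(Q)=Q^{2-n}$ the combination $f''+\frac{n-1}{Q}f'$ vanishes identically, so $Q^{2-n}$ is a null solution of the frozen operator; for $f(Q)=Q^{2-n-\beta}$ one computes $f''+\frac{n-1}{Q}f' = \beta(n-2+\beta)Q^{-n-\beta}$. Subtracting gives $\sum_{i,j}a_{ij}^\infty\partial_{ij}\psi = -\beta(n-2+\beta)Q^{-n-\beta}$, the asserted leading term.

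It remains to bound the perturbation $\sum_{i,j}(a_{ij}-a_{ij}^\infty)\partial_{ij}\psi$. From the formula for $\partial_{ij}f(Q)$, together with $|w_i|\le C|\bx|\le CQ$, one reads off $\partial_{ij}(Q^{2-n}) = O(Q^{-n})$ and $\partial_{ij}(Q^{2-n-\beta}) = O(Q^{-n-\beta})$, so $\partial_{ij}\psi = O(Q^{-n})$ since $\beta>0$. Combined with \eqref{aijproperty}, which yields $|a_{ij}-a_{ij}^\infty|\le C|\bx|^{-1-\sigma}\le CQ^{-1-\sigma}$, each summand is $O(Q^{-n-1-\sigma})$, and summing over the finitely many indices preserves this order; adding the two pieces gives \eqref{Lpsi}. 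The main obstacle I anticipate is bookkeeping rather than conceptual: one must check carefully that the two contraction identities hold and that the comparability $\frac1C|\bx|\le Q\le C|\bx|$ lets the $|\bx|^{-1-\sigma}$ decay of \eqref{aijproperty} transfer cleanly to $Q$, so the error genuinely lands at order $Q^{-n-1-\sigma}$ and no worse.
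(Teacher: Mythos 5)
Your proposal is correct and follows essentially the same route as the paper: the same splitting into the frozen-coefficient operator plus the perturbation $\sum_{i,j}(a_{ij}-a_{ij}^\infty)\partial_{ij}\psi$, the same two contraction identities coming from $(A_{ij})$ being the inverse of $(a_{ij}^\infty)$, and the same use of \eqref{aijproperty} together with $Q\sim|\bx|$ to place the error at $O(Q^{-n-1-\sigma})$. The only cosmetic difference is that you package the frozen operator as the radial expression $f''+\frac{n-1}{Q}f'$ before evaluating on the two powers, whereas the paper computes $\sum_{i,j}a_{ij}^\infty\partial_{ij}Q^{-l}=l(l+2-n)Q^{-l-2}$ directly.
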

\begin{proof}
	 For any $l>0$, the straightforward computations yield that
	\begin{equation*}
	\frac{\partial Q^{-l}}{\partial x_i}=-lQ^{-l-2}\sum_{k=1}^{n}A_{ik}x_k
	\end{equation*}
	and
	\begin{equation*}
	\frac{\partial^2 Q^{-l}}{\partial x_i\partial x_j}=l(l+2)Q^{-l-4}\bigg(\sum_{s=1}^n A_{js}x_s\bigg)\bigg(\sum_{k=1}^n A_{ik}x_k\bigg)-lA_{ij}Q^{-l-2}.
	\end{equation*}
Then,
\begin{eqnarray*}
\sum_{i, j=1}^n a_{ij}^\infty \frac{\partial^2 Q^{-l}}{\partial x_i\partial x_j}&=&lQ^{-l-2}\left((l+2)Q^{-2}\sum_{i, j=1}^n a_{ij}\left(\sum_{s=1}^n A_{js}x_s\right)\left(\sum_{k=1}^n A_{ik}x_k\right)-\sum_{i, j=1}^nA_{ij}a_{ij}\right)\nonumber\\
&=&lQ^{-l-2}\left((l+2)Q^{-2}\sum_{j=1}^n x_j\left(\sum_{k=1}^n A_{ik}x_k\right)-n\right)\nonumber\\
&=&l(l+2-n)Q^{-l-2},\nonumber
\end{eqnarray*}
which leads to
\begin{equation}\label{eq4}
\sum_{i, j=1}^n a_{ij}^\infty\frac{\partial^2Q^{2-n}}{\partial x_i\partial x_j}=0\quad \mbox{and}\quad \sum_{i, j=1}^n a_{ij}^\infty\frac{\partial^2Q^{2-n-\beta}}{\partial x_i\partial x_j}=\beta(n-2+\beta)Q^{-n-\beta}.	
\end{equation}
Combining with \eqref{aijproperty} and \eqref{eq4} yields
\begin{equation*}
	\begin{split}
\sum_{i, j=1}^n a_{ij}\partial_{ij}\psi&=\sum_{i, j=1}^n (a_{ij}-a_{ij}^{\infty})\partial_{ij}\psi+ \sum_{i, j=1}^n a_{ij}^{\infty}\partial_{ij}\psi\\
&=(a_{ij}-a_{ij}^{\infty})\bigg(\frac{\partial^2(Q^{2-n}-Q^{2-n-\beta})}{ \partial  x_i \partial x_j}\bigg)-\beta(n-2+\beta)Q^{-n-\beta}\\
&=	-\beta(n-2+\beta)Q^{-n-\beta}+O(Q^{-n-1-\sigma}).
\end{split}\end{equation*}
\end{proof}

Next, we will prove the Theorem \ref{mainthm} by \textbf{Airfoil Far Field Flow} and \textbf{Largely-open Nozzle Far Field Flow}, respectively.

\subsection{Airfoil Far Field Flow.}\label{subsection2.2} Here, we introduce
$$h_{\pm}(\bx):=\pm\frac{1}{C_1}\Phi(\bx)-\psi(\bx)$$
to estimate $|\Phi|$ via the comparison principle, while  $C_1>0$ will de determined later. Due to \eqref{Lpsi}, there exists a $R'$ large enough, such that
\begin{equation*}
\sum_{i, j=1}^n a_{ij}\partial_{ij}h_{\pm}(\bx)\geq \beta(n-2+\beta)Q^{-n-\beta}+O(Q^{-n-1-\sigma})\geq 0\quad\text{in } \Omega_{R'},
\end{equation*}
which implies $h_{\pm}(\bx)$ could not obtain the maximum in the interior of $\Omega_{R'}$. On the boundary of $\Omega_{R'}$, which is $\{|\bx|=R'\}$, one could have $h_{\pm}(\bx)\leq 0$, by choosing $C_1$ large enough. Furthermore, $\lim_{|\bx|\rightarrow \infty} h_{\pm}(\bx)=\lim_{|\bx|\rightarrow \infty}\left(\pm\frac{1}{C_1}\Phi(\bx)-\psi(\bx)\right)=0$. Then, we could conclude that $h_{\pm}(\bx)\leq 0$ in $\Omega_{R'}$. Then
\begin{equation*}
|\Phi(\bx)|\leq\frac{1}{C_1}\left(Q^{2-n}-Q^{2-n-\beta}\right)\leq C|\bx|^{2-n}.
\end{equation*}
Evidently, when $|\bx|$ large, one has
\begin{equation}\label{Phiestimate}
|\Phi|_{0;\Omega}^{(n-2)}\leq C,
\end{equation}
while the definition of $|\Phi|_{2,\tau;\Omega}^{(n-2)}$ is given in \cite[Section 6.1]{Gilbarg2001Trudinger}. Applying \cite[Lemma 6.20]{Gilbarg2001Trudinger} to the equation \eqref{Phi} yields
\begin{equation}\label{schauder}
|\Phi|_{2,\tau;\Omega}^{(n-2)}\leq C|\Phi|_{0;\Omega}^{(n-2)},
\end{equation}
where $0<\tau<1$ is a constant. It follows from \eqref{Phiestimate} and \eqref{schauder} that
\begin{equation*}
|\nabla\Phi|\leq C|\bx|^{1-n}.
\end{equation*}
This completes the proof of  \eqref{main_estimate} for the {\bf Airfoil Far Field Flow}.

\subsection{Largely-open Nozzle Far Field Flow.} The new effect of this case is the boundary nozzle, which is written as $\partial\mathcal C_{\pm, R}:=\{|x|\geq R\}\cap\partial\mathcal C_\pm$. Then $\nabla\Phi\cdot{\bf n}=\nabla\varphi\cdot{\bf n}=0$ on $\partial\mathcal C_{\pm, R}$.

In this case, the velocity at infinity is ${\bf{u^\infty}}=(0,0,\cdots,0)$.
Therefore,\begin{equation*}
	a_{ij}^\infty=\bar\rho\delta_{ij},
\end{equation*}
where $\bar{\rho}=h^{-1}(B)$ by  the Bernoulli's law $(\ref{eulereq})_3$.

Without loss of generality, we consider the $\mathcal C_+$, and $\mathcal C_-$ part could be handled similarly. After the coordinate transformation, one may assume $A_+=(0, 0,\cdots,0)$. We could introduce $Q$, $\psi$ and $h_\pm(\bx)$ as {\bf Airfoil Far Field Flow}.   Furthermore, on $\partial\mathcal C_{+, R}$, the directly calculations give ${\nabla Q}\cdot{\bf n}=0$. This implies: ${\nabla\psi}\cdot{\bf n}=0$ on $\partial\mathcal C_{+, R}$.

 On $\partial\mathcal C_{+, R'/2}$, $\nabla h_{\pm}(\bx)\cdot {\bf n}=0$. By the Hopf Lemma, $h_{\pm}(\bx)$ could obtain the maximum point at $\partial\mathcal C_{+, R'/2}$. On $\partial B_{+, R'/2}:=\{|\bx|=R'/2\}\cap\overline{\mathcal C_+}$, one could have $h_{\pm}\bx)\leq 0$, by choosing $C_1$ large enough. With the aid of  $\lim_{|\bx|\rightarrow \infty} h_{\pm}(\bx)=\lim_{|\bx|\rightarrow \infty}\left(\pm\frac{1}{C_1}\Phi(\bx)-{\psi}(\bx)\right)=0$. Then, we could conclude that $h_{\pm}(\bx)\leq 0$ in $\Omega_{R'/2}$. Therefore,
\begin{equation}\label{nozzlephi_L_infinity}
	|\Phi(\bx)|\leq\frac{1}{C_1}\left({Q}^{2-n}-{Q}^{2-n-\beta}\right)\leq C|\bx|^{2-n}.
\end{equation}

For any $R_1$ and $R_2$ satisfying $0<R_1<R_2$, denote
$$\mathcal C_{+}^{R_1}=\{|\bx|\geq R_1\}\cap\mathcal C_+\qaq\mathcal C_{+}^{R_1,R_2}=\{R_1\leq|\bx|\leq R_2\}\cap\mathcal C_+.$$

Finally, we use the Schauder estimate and \eqref{nozzlephi_L_infinity} to show that the convergence rates of $|\nabla\Phi(\bx)|$ is $|\bx|^{-n+1}$. For $\bx\in \mathcal C_+^{{T}/{2},{4T}}$, write $\Psi(\bz):=\Phi(T\bz)$ with $\bz=\frac{1}{T}\bx$. Then
 \begin{equation*}
	\sum_{i, j=1}^n a_{ij}(T\bz)\partial_{z_iz_j}\Psi=0.
\end{equation*}
It follows from the Schauder estimates that
\begin{equation*}
\sup\limits_{\bz\in\mathcal C_+^{1,2}}|\nabla_\bz\Psi(\bz)|\leq C\sup\limits_{\bz\in\mathcal C_+^{1/2,4}}|\Psi|.
\end{equation*}
This implies
\begin{equation}\label{CT}
T\sup\limits_{\bx\in\mathcal C_+^{T,2T}}|\nabla\Phi(\bx)|\leq C\sup\limits_{\bx\in\mathcal C_+^{T/2,4T}}|\Phi(\bx)|\leq
C|T|^{2-n},\end{equation}
where \eqref{nozzlephi_L_infinity} has been used. Note that $\mathcal C_+=\cup_{i=0}^\infty\mathcal C_+^{2^iR',2^{i+1}R'}$. For any $\bx\in \Omega_{R'}$, there is fixed $i\geq 0$ such that for $\bx\in\mathcal C_+^{2^iR',2^{i+1}R'}$. Applying \eqref{CT} yields that
\begin{equation*}
|\nabla\Phi(\bx)|\leq C|2^iR'|^{1-n}\leq C |\bx|^{1-n},\end{equation*}
which completes the proof for the {\bf Largely-open Nozzle Far field flow}. 
\subsection{Incompressible Case}
In this section, we consider the incompressible case, then \eqref{eulereq} becomes 
\begin{equation}\label{inceulereq}
	\begin{cases}
		\Div {\bf u} = 0,   \\
		{\rm curl}\  {\bf u}=0,
		\\
		\frac{1}{2}|\bu|^2+p\equiv B,
	\end{cases}
\end{equation}
where velocity $\bu$ and pressure $p$ are decoupled. 
Similar to the compressible case, one could define \textbf{Airfoil Problem}, \textbf{Largely-open Nozzle Problem}, \textbf{Airfoil Far Field Flow}, and \textbf{Largely-open Nozzle Far Field Flow} for the incompressible case. 

With the aid of the potential function $\varphi$ satisfying $\bu=\nabla\varphi$ can be governed as
\begin{equation*}
\Delta\varphi=0
\end{equation*}
with $a_{ij}=\delta_{ij}$. Similar to Theorem \ref{mainthm} we could have:
\begin{theorem}\label{mainthm1} For $n\geq3$, suppose $\bu$ is the incompressible \textbf{Airfoil Far Field Flow} or \textbf{Largely-open Nozzle Far Field Flow}, there exists a positive constant $\sigma$ for the respective $\bu^\infty$ such that
	\begin{equation}\label{first_decay1}
		|\bu(\bx)-\bu^\infty|\leq C|\bx|^{-1-\sigma}\quad\text{in}\quad\Omega_{R},\end{equation}
	while $C$ and $R$ are the fixed constants. Then, for some $R'\geq R$,
	\begin{equation}\label{main_estimate1}
		|\bu(\bx)-\bu^\infty|\leq C_0|\bx|^{-n+1}\quad\text{when}\quad \bx\in\Omega_{R'},
	\end{equation}
	where $C_0$ is a uniform constant depending on $\Omega_{R'}$.
\end{theorem}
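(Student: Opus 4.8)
The plan is to run the argument of Theorem~\ref{mainthm} essentially verbatim, exploiting the fact that the incompressible problem is the degenerate case $a_{ij}=\delta_{ij}$ of the quasilinear equation~\eqref{Phi}. First I would introduce the potential: since $\text{curl}\,\bu=0$ on the simply connected set $\Omega_R$, set $\nabla\varphi=\bu$ and $\varphi^\infty=\bu^\infty\cdot\bx$, and define $\Phi:=\varphi-\varphi^\infty$, so that $\nabla\Phi=\bu-\bu^\infty$. Because $\Delta\varphi=0$ and $\varphi^\infty$ is linear (hence harmonic), $\Phi$ is harmonic: $\Delta\Phi=0$. The hypothesis~\eqref{first_decay1} yields $|\nabla\Phi|\le C|\bx|^{-1-\sigma}$, and the Kelvin-transform computation~\eqref{lemma1estimateY} then forces $\Phi$ to approach a constant at infinity, which I normalize to $0$; in particular $\Phi$ is bounded on $\Omega_R$.

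Next I would build the comparison function. Here $a_{ij}^\infty=\delta_{ij}$, so its inverse is $(A_{ij})=(\delta_{ij})$ and $Q(\bx)=|\bx|$. Taking $\psi=Q^{2-n}-Q^{2-n-\beta}$ with $0<\beta<1$ and invoking Lemma~\ref{lemma2}, I would observe that the remainder $O(Q^{-n-1-\sigma})$ in~\eqref{Lpsi} originates entirely from $a_{ij}-a_{ij}^\infty$ and therefore vanishes identically, leaving the clean identity
$$\Delta\psi=-\beta(n-2+\beta)\,|\bx|^{-n-\beta}.$$
Setting $h_\pm:=\pm\tfrac{1}{C_1}\Phi-\psi$ then gives $\Delta h_\pm=\beta(n-2+\beta)|\bx|^{-n-\beta}\ge0$ on all of $\Omega_{R'}$, with no need to enlarge $R'$ to absorb a lower-order error.

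Then I would close by the maximum principle exactly as in Subsection~\ref{subsection2.2} and the subsequent nozzle subsection. Since $h_\pm$ is subharmonic it admits no interior maximum; choosing $C_1$ large makes $h_\pm\le0$ on $\{|\bx|=R'\}$, and $h_\pm\to0$ at infinity, so $h_\pm\le0$ throughout. For the nozzle I would additionally use $\nabla Q\cdot\bn=0$ on the wall, which gives $\nabla h_\pm\cdot\bn=0$ there, so that the Hopf lemma rules out a boundary maximum as well. In either case $|\Phi|\le C|\bx|^{2-n}$. Feeding this into the weighted Schauder estimate~\eqref{schauder} together with \cite[Lemma 6.20]{Gilbarg2001Trudinger} (or, for the nozzle, the rescaling $\Psi(\bz)=\Phi(T\bz)$ and interior estimates), I obtain $|\nabla\Phi|\le C|\bx|^{1-n}$, which is~\eqref{main_estimate1} since $\nabla\Phi=\bu-\bu^\infty$.

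I do not anticipate a genuine obstacle: the incompressible case is a strict specialization of Theorem~\ref{mainthm}, and the linearity of $\Delta$ removes the quasilinear coupling altogether. The only point deserving care is the bookkeeping above---confirming that the remainder of Lemma~\ref{lemma2} is exactly zero here, so that the subharmonicity $\Delta h_\pm\ge0$, and hence the comparison bound, holds on the entire far field rather than merely for $|\bx|$ large.
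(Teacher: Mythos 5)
Your proposal is correct and follows the paper's own route: the paper proves Theorem~\ref{mainthm1} simply by observing that the incompressible potential satisfies $\Delta\varphi=0$, i.e.\ equation~\eqref{Phi} with $a_{ij}=\delta_{ij}$, and then invoking the argument of Theorem~\ref{mainthm} verbatim. Your extra bookkeeping---that $Q(\bx)=|\bx|$ and the $O(Q^{-n-1-\sigma})$ remainder in Lemma~\ref{lemma2} vanishes identically, so $h_\pm$ is genuinely subharmonic without enlarging $R'$---is a correct and slightly cleaner rendering of the same specialization.
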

\begin{remark}
	The existence of $\bu$ satisfying \eqref{first_decay} could be found in \cite{MR1280178, MR4093720} for \textbf{Airfoil Far Field Flow} and \textbf{Largely-open Nozzle Far Field Flow}, respectively.
\end{remark}

\section{Optimality}\label{optimality}
In this section, we proceed with the study of the optimal convergence rates of the velocity at far fields.

\subsection{\bf Largely-open Nozzle Far Field Flow}\label{sec3-1} Let $\bu$ satisfy \eqref{eulereq} with the slip boundary condition \eqref{boundarycondition2} and mass flux condition \eqref{flux}, and the respective potential is $\varphi$. Denote $$S_R=\mathcal C_+\cap\partial B_{R}(A_+).$$
Then $\Sigma_+=S_1$ as $R=1$. By \eqref{flux}, one has
\begin{equation}\label{optimaleq5}
	\int_{S_R}\rho(|\nabla\varphi|^2)\nabla\varphi\cdot {\mathbf l}ds=m>0\quad\text{for any}~R\geq1
\end{equation}
with $\mathbf l=({x_1/R,x_2/R,\cdots,x_n/R})$ be the unit  normal of $S_R$ pointed the downstream and $m$ is the mass flux.
Define $$u_0(r_+)=\frac{m}{|\Sigma_+|}r_+^{1-n}$$
with $r_+=|\bx-A_+|$.
Therefore, one has
\begin{equation}\label{optimaleq6}
	\int_{S_R}u_0(r_+)ds=m\quad\text{for any}~R\geq1.
\end{equation}
Combining \eqref{optimaleq5} and \eqref{optimaleq6} yields that
\begin{equation*}
	\int_{S_R}\rho(|\nabla\varphi|^2)\nabla\varphi\cdot {\mathbf l}-u_0(r)ds=0.
\end{equation*}
Then for each $R$ large enough, the mean value theorem gives that there is a constant $\bar\bx_{R}\in S_R$ such that
$$\rho(|\nabla\varphi(\bar\bx_{R})|^2)\nabla\varphi(\bar\bx_{R})\cdot \mathbf{l}(\bar \bx_{R})=u_0(|\bar\bx_{R}|).$$
Note that $\rho$ is bounded, one may assume that $\rho\leq\Lambda$. Then
\begin{equation*}
	\Lambda|\nabla\varphi(\bar\bx_{R})|\geq\rho(|\nabla\varphi(\bar\bx_{R})|^2)\nabla\varphi(\bar\bx_{R})\cdot \mathbf{l}(\bar \bx_{R})=u_0(R)=\frac{m}{|\Sigma_+|}R^{1-n}=\frac{m}{|\Sigma_+|}|\bar\bx_{R}|^{1-n}.
\end{equation*}
This shows the optimality of Theorem \ref{mainthm} for  compressible {\bf Largely-open Nozzle Far Field Flow}, which is similar to Theorem \ref{mainthm1} for incompressible case.
\subsection{Airfoil Far Field Flow}  Similar to {\bf Largely-open nozzle Far Field Flow}, one has
\begin{equation*}
	\int_{\partial B_R({\bf0})}\rho(|\nabla\varphi|^2)\nabla\varphi\cdot\bn ds=m
\end{equation*}
with $\mathcal B\subset B_R({\bf0})$ and $m$ is a constant.
If $m\not=0$, one may introduce the function $u_0(\bx)=\frac{m}{|\partial B_1({\bf0})|}|\bx|^{1-n}$, then
\begin{equation*}
	\int_{\partial B_R({\bf0})}\rho(|\nabla\varphi|^2)\nabla\varphi\cdot {\mathbf n}-u_0(\bx)ds=0.
\end{equation*}
As in Section \ref{sec3-1}, there is a point $\tilde\bx_R\in\partial B_R(\bf0)$ such that
\begin{equation*}
\Lambda|\nabla\varphi(\tilde\bx_R)|\geq\frac{m}{|\partial B_1({\bf0})|}R^{1-n}=\frac{m}{|\partial B_1({\bf0})|}|\tilde\bx_R|^{1-n}.
\end{equation*}
This shows the optimality of Theorem \ref{mainthm} for  compressible {\bf Airfoil Far Field Flow}, which is similar to Theorem \ref{mainthm1} for incompressible case.

Unfortunately, for the {\bf Airfoil problem}, the boundary condition \eqref{boundarycondition1} leads to $m=0$. In this case $u_0=0$, then this approach fail to show the optimality. In the next part, we will show the rates could be improved in the incompressible case. 
\subsection{Incompressible Airfoil Problem}\label{incompressible_case}
In this section, we shall show the expansion of incompressible airfoil flow at infinity. 

\begin{theorem}\label{thm2}
	For $\bu$ is the incompressible airfoil flow  satisfying \eqref{first_decay1}, then the corresponding potential function satisfies:
	\begin{equation*}
		\varphi(\bx)=q^\infty_1 x_1+\sum_{i=1}^n G_i\partial_{x_i}|\bx|^{2-n}+\sum_{i, j=1}^n G_{ij}\partial_{x_ix_j}|\bx|^{2-n}+O(|\bx|^{-n-1}),
	\end{equation*}
	where $G_i$ and $G_{ij}$ only depend on the value of $\varphi$ on $\partial\mathcal{B}$. 
\end{theorem}

\begin{proof}
The potential function $\varphi$ satisfies
\begin{equation}\label{incvarphiequation}
	\begin{cases}
		\Delta\varphi=0~ &\text{in}~\Omega^{\mathcal{B}},\\
			\nabla\varphi\cdot{\bf n}=0& \text{on}~\partial \mathcal B,\\
			\nabla\varphi=(q_1^\infty,0, \cdots,0) &\text{as}~ |\bx|\rightarrow\infty.
		\end{cases}
\end{equation}
From the  Section \ref{subsection2.2}, $\Phi(\bx)=\varphi(\bx)-q_1^\infty x_1+c_0$ with $c_0$ be the constant such that: 
\begin{equation} \label{Phi0infty0}
	\Phi(\bx)\rightarrow0~\text{as}~|\bx|\rightarrow \infty.
\end{equation} 
And, \eqref{incvarphiequation} becomes
\begin{equation}\label{incPhiequation}
	\begin{cases}
		\Delta_\bx\Phi=0~ &\text{in}~\Omega^{\mathcal B},\\
		\nabla_\bx\Phi\cdot \bn=-q_1^\infty n_1&\text{on}~\partial\mathcal B,\\
		\nabla_\bx\Phi={\bf 0} &\text{as}~ |\bx|\rightarrow\infty.
	\end{cases}
\end{equation}
 Integrating \eqref{incPhiequation} on $\partial\mathcal B$ leads to:
\begin{equation}\label{Phiboundaryint}
	\int_{\partial\mathcal{B}}\nabla\Phi\cdot \bn ds=0.
\end{equation}
For $r$ large enough, $\mathcal B\subset B_r(\mathbf0)$. By Kelvin transform  $\by=\frac{1}{|\bx|}\bx$ as $\mathbb{R}^n \setminus B_r(\mathbf0) \rightarrow B_{\frac{1}{r}}(\mathbf0)\setminus \{\mathbf0\}$, 
\begin{equation}\label{eq1}
	\bar\Phi(\by)=|\by|^{-n+2}\Phi\bigg(\frac{1}{|\by|^2}\by\bigg).
\end{equation}
From \cite[Theorem 4.13]{Gilbarg2001Trudinger}, in $B_{\frac{1}{r}}(\mathbf0)\setminus \{\mathbf0\}$,  $\Delta_\by\bar\Phi=0$. And, from \eqref{Phi0infty0}, as $|\by|\rightarrow0$,
\begin{equation*}
|\by|^{n-2}\bar\Phi(\by)=\Phi\bigg(\frac{1}{|\by|^2}\by\bigg)\rightarrow 0.
\end{equation*} 
Then, by \cite[Theorem 1.28]{MR2777537}, $\{\mathbf0\}$ is a removable singularity for $\bar\Phi$. Then, $\bar\Phi$ is harmonic in $B_{\frac{1}{r}}(\mathbf0)$, which implies analytic. Then, for $R'\geq R$, when $|\by|<\frac{1}{R'}$,
$\bar\Phi(\by)=\sum_{k=0}^{\infty}\sum_{|\alpha|=k}\frac{\partial^\alpha_y\bar\Phi(\mathbf0)}{\alpha!}\by^\alpha$, where $\alpha$ is the multi-index. Then, we have, for $|\bx|>R'$
\begin{equation}\label{preanalytic}
\Phi(\bx)=|\bx|^{-n+2}\bar\Phi(\frac{1}{|\bx|^2}\bx)=\sum_{k=0}^{\infty}\sum_{|\alpha|=k}\frac{\partial^\alpha_y\bar\Phi(\mathbf0)}{\alpha!}|\bx|^{-n+2-2|\alpha|}\bx^\alpha.
\end{equation}
Next, we will compute the coefficients of the expansion.
For any $\bx_0\in\Omega^{\mathcal B}$,  applying the Green's  second identity\cite[Equation 2.11]{Gilbarg2001Trudinger} in $B_R(\bx)\setminus(B_\epsilon(\bx)\cup\mathcal{B})$ with $\Phi$ and fundamental solution ${|\bz-\bx|^{-n+2}}$ yields that
\begin{equation*}
	\Phi(\bx)=-\frac{1}{n(n-2)\alpha(n)}\int_{\partial\mathcal{B}}(\Phi(\bz)\nabla(|\bz-\bx|^{-n+2})+|\bz-\bx|^{-n+2}\nabla\Phi(\bz))\cdot \bn ds(\bz).
\end{equation*}
Note that 
\begin{equation*}
|\bx-\bz|^{-n+2}=|\bx|^{-n+2}-\sum_{i=1}^n\partial_{x_i}|\bx|^{-n+2}z_i+\frac{1}{2}\sum_{i, j=1}^n\partial_{x_ix_j}|\bx|^{-n+2}z_iz_j+\sum_{k=3}^{\infty}\sum_{|\alpha|=k}\frac{\partial^\alpha|\bx|^{-n+2}}{\alpha!}\bz^\alpha.
\end{equation*}
Noticing for each fixed $|\alpha|=k$, $|\bx|^{-n+2-2|\alpha|}\bx^\alpha$ and $\partial^\alpha|\bx|^{-n+2}$ are one-to-one, \eqref{preanalytic} and the uniqueness of coefficients leads to 
\begin{equation*}
	\Phi(\bx)=G|\bx|^{2-n}+\sum_{i=1}^nG_i\partial_{x_i}|\bx|^{2-n}+\sum_{i, j=1}^nG_{ij}\partial_{x_ix_j}|\bx|^{2-n}+O(|\bx|^{-n-1})
\end{equation*}
with
\begin{equation*}
	\begin{cases}
		G=-\frac{1}{n(n-2)\alpha(n)}\int_{\partial\mathcal{B}}\nabla\Phi\cdot \bn ds,\\
		G_i=\frac{1}{n(n-2)\alpha(n)}\int_{\partial\mathcal{B}}\bigg(x_i\nabla\Phi\cdot \bn+n_i\Phi\bigg)ds,\\
		G_{ij}=\frac{1}{n(n-2)\alpha(n)}\int_{\partial\mathcal{B}}\bigg(-\frac{1}{2}x_ix_j\nabla\Phi\cdot \bn+x_in_j\Phi \bigg)ds.
	\end{cases}
\end{equation*}
And, $G=0$ by \eqref{Phiboundaryint}. $\Phi$ is the linear combination of the derivatives of fundamental solution.
\end{proof}
\begin{remark}
	As a direct consequence, one could have: For the incompressible airfoil flow $\bu$,
	\begin{equation}
		|\bu(\bx)-\bu^\infty|\leq C_0|\bx|^{-n} \quad\text{for } \bx\in\Omega_{R'}
	\end{equation}
	where $C_0$ and $R'$ are uniform constant depending on $\Omega^{\mathcal B}$.
	Now, the interesting questions are: whether the above estimate is optimal and whether it stands for the compressible case.
\end{remark}

\medskip
\textbf{Acknowledgement.}
The research was partially supported by NSFC grants 11971307 and 12061080.
The authors would like to thank Professor Chunjing Xie for  helpful discussions.
\nocite{*}

\end{document}